\documentclass[11pt]{article}
\usepackage[letterpaper,margin=1in]{geometry}
\usepackage{todonotes}
\usepackage[T1]{fontenc}
\usepackage{lmodern}
\usepackage{amsmath,amssymb,amsthm,mathtools,bbm}
\usepackage{microtype}
\usepackage[hidelinks]{hyperref}
\usepackage{enumitem}
\usepackage{fancyhdr}
\newtheorem{theorem}{Theorem}
\newtheorem{lemma}[theorem]{Lemma}

\theoremstyle{remark}
\newtheorem{remark}[theorem]{Remark}
\newtheorem{definition}[theorem]{Definition}
\newtheorem{example}[theorem]{Example}
\newcommand{\Z}{\mathbb Z}
\newcommand{\R}{\mathbb R}
\newcommand{\Q}{\mathbb Q}
\newcommand{\E}{\mathbb E}
\newcommand{\Prob}{\mathbb P}
\renewcommand{\P}{\mathbb P}
\newcommand{\ee}{\varepsilon}
\newcommand{\norm}[1]{\left\lVert #1\right\rVert}
\newcommand{\bs}[1]{\boldsymbol{#1}}
\newcommand{\sss}[1]{\scriptscriptstyle{#1}}
\newcommand{\mc}[1]{\mathcal{#1}}

\newcommand{\sone}{\sss{\{1\}}}
\newcommand{\stwo}{\sss{\{2\}}}
\newcommand{\sr}{\sss{\{r\}}}

\newcommand{\START}{$\blacktriangleright\,$ }
\newcommand{\END}{\hfill $\,\blacktriangleleft$}
\title{The velocity is deterministic in random walk in 2-valued i.i.d.~random environment.}
\author{Mark~Holmes\thanks{University of Melbourne}  ~and Thomas~S.~Salisbury\thanks{York University}}
\date{October 20, 2026}
\begin{document}
\maketitle
\thispagestyle{plain}

\begin{abstract}
We prove that the limiting velocity for (nearest-neighbour) random walk in i.i.d.~random environment is deterministic when the local environment is supported on two points.  No ellipticity is assumed.  The proof uses 
a clever adaption (suggested by ChatGPT) of coupling arguments previously developed by the authors.  It extends to i.i.d.~environments where the local environments are mixtures of two such points.  
\end{abstract}

\section{Introduction and Main results}

Fix $d\ge 1$ and let $\mathcal E=\{\pm e_1,\ldots,\pm e_d\}$.  An \emph{environment} $\bs{\omega}=(\omega_x)_{x\in \Z^d}$ is a family of probability distributions (more precisely, probability mass functions) on $\mc{E}$, indexed by sites of the lattice $\Z^d$.  In other words, for every $x\in \Z^d$, $\omega_x(e)\ge 0$ for every $e\in \mc{E}$ and $\sum_{e\in \mc{E}}\omega_x(e)=1$.  These probability distributions can be used to define the law of a discrete-time, time-homogeneous Markov chain $\bs{X}=(X_n)_{n\ge 0}$ on $\Z^d$.  Starting e.g. from $X_0=0$, when the walker is at $x$ at time $n$, the probability that it steps to $x+e$ at the next time step is $\omega_x(e)$.

When $\bs{\omega}$ is itself chosen randomly, according to some measure $\mathbb{Q}$, one can ask questions about the behaviour of the walk $(X_n)_{n \ge 0}$ for a fixed  environment, or about the joint law $\P$ of the environment and the walk.  Under $\P$ if we just ask about the environment this is equivalent to $\Q$, while if we just ask about the random walk path it corresponds to averaging  over the possible environments, and the random walk is not a Markov chain (as its history reveals information about the environment).   Among the most important and well-studied settings is that of \emph{random walk in i.i.d.~random environment} when the environment is chosen to be i.i.d.~over the sites.  Henceforth we consider only this setting.

 Following the fundamental work of Solomon \cite{Solomon}, the 1-dimensional case is rather well understood.  In particular, there is a deterministic limiting velocity $v=\lim_{n\to \infty}n^{-1}X_n$ (i.e.~it is the same for $\mathbb{P}$-almost every environment).  This is also known to hold in 2 dimensions.  More generally it was known that in $\Z^d$, $n^{-1}X_n$ converges almost surely for $\P$-almost every environment but that the limit could be random.    Proving that the limit is deterministic has been one of the major open problems in the field in the last 50 years.    In this paper we prove this result when the local  random environment can only take 2 possible values.  We do not assume that the walk is \textit{elliptic} (i.e.~that all $\omega_x(e)>0$).  The proof uses an adaption of a coupling argument of \cite{HS} that was suggested by ChatGPT.  The method generalises to i.i.d.~environments that are random mixtures of two fixed environments.  
 
 Around the same time that this paper was being written (October 2026), it was announced (as one of hundreds of  new results) by OpenAI that they had a  proof of the full result in the case of  ellipticity.  At the time of writing we are not aware that this proof has been verified by a human.

An (i.i.d.) environment is \emph{$2$-valued} if there exist two distinct probability distributions (probability mass functions to be precise) $\mu_1,\mu_2$ on $\mathcal E$, and a parameter $p$ such that $(\omega_x)_{x\in \Z^d}$ are i.i.d.~with 
\[\Q(\omega_o=\mu_1)=p=1-\Q(\omega_o=\mu_2).\]
\begin{example}
\START
Let $\mu_1$ be the uniform distribution on $\mc{E}_+=\{e_1,\dots, e_d\}$ and $\mu_2$ be the uniform distribution on $\mc{E}_-=\{-e_1,\dots, -e_d\}$.  This is a non-elliptic environment (called the orthant model) for which the range of the random walk is infinite and the velocity is believed to be non-zero for $p\ne 1/2$.  The velocity is known to be  deterministic (and non-zero) if $p$ is sufficiently large or sufficiently small \cite{HSRWDRE2}.   \footnote{Other methods can be used to show that the velocity is deterministic in high dimensions}\END
\end{example}
Set
\[
 m_r=\sum_{e\in\mathcal E}e\,\mu_r(e),\qquad r=1,2.
\]
For a realisation $\bs{\omega}$ of an i.i.d.~environment, let $P^{\bs{\omega}}$ be the law of the discrete-time, time homogenous Markov chain $\bs{X}$ on $\Z^d$ with transition probabilities at $x$ given by $\omega_x$, i.e.
\[
 P^{\bs{\omega}}(X_{n+1}=x+e\mid X_n=x)=\omega_x(e).
\]
 Henceforth we assume that $d\ge 2$ is fixed.  Our main result is the following. 
\begin{theorem}\label{thm:main}
For every $p\in[0,1]$ there is a deterministic $v=v(p)\in\R^d$ such that
\[
 P^{\bs{\omega}}\!\left(\lim_{n\to\infty}\frac{X_n}{n}=v\right)=1
 \quad\text{for $\Q$-almost every }\bs{\omega}.
\]
\end{theorem}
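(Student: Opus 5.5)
We start from the standard structure of the problem, which we take as known. Since $n^{-1}X_n$ converges $\P$-almost surely, when the limit is random it takes at most two values $v_{+\ell}$ and $v_{-\ell}$. These correspond to the events $A_{\pm\ell}=\{X_n\cdot\ell\to\pm\infty\}$ for some direction $\ell$, and on $A_{\pm\ell}^c\cap A_{\mp\ell}^c$ the velocity in direction $\ell$ is $0$. The cases $p\in\{0,1\}$ are trivial, because then the environment is deterministic and translation invariant, and $X_n/n\to m_1$ or $m_2$ by the strong law of large numbers. So we fix $p\in(0,1)$. It suffices to show that whenever $0<\P(A_\ell)<1$ for some $\ell$, the velocity on $A_\ell$ must vanish or coincide with the velocity on $A_{-\ell}$. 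In either case $v$ is deterministic. By the Kalikow/Sznitman–Zerner renewal structure, $v_{\pm\ell}=\E[X_{\tau_1}\mid D]/\E[\tau_1\mid D]$, where the $\tau_k$ are regeneration times in direction $\pm\ell$ and $D$ is the event of never backtracking. The task is to compare these two quantities.

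The key step is a coupling in the spirit of \cite{HS}. We write the i.i.d.\ environment as $\omega_x=\mu_1$ if $U_x<p$ and $\omega_x=\mu_2$ otherwise, with $(U_x)$ i.i.d.\ uniform, and we drive the walk by i.i.d.\ uniforms $W_{x,k}$ attached to the $k$-th visit to $x$. Because there are only two local laws, the walk is determined by a field of "instructions": at each site and visit, the pair (step under $\mu_1$, step under $\mu_2$) is sampled jointly, for example by a quantile coupling, so that the two candidate steps are fixed in advance. For $p'<p$ we then obtain monotonically coupled environments in which the set of $\mu_1$-sites grows with $p$.

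We use this to show that the velocity $v(p)$ along each regeneration direction can be represented through quantities that are monotone or continuous in $p$. We also show that on a transient event the walk in environment $p$ and the walk in environment $p'$ close to $p$ differ only at finitely many sites before regeneration, with controlled probability. We would then adapt the \cite{HS} argument. If $\P(A_\ell)\in(0,1)$, the $0$-$1$ law fails, and the coupling produces a positive-probability event on which walks in the same environment, started at distant points, visit a common site while going in opposite directions. Since both walks use the same instruction field from that common site onward, they follow the same path from then on. This forces $v_\ell=v_{-\ell}$ in the limit, or forces both to be $0$.

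The main obstacle is the coupling step without ellipticity. In non-elliptic environments such as the orthant model, walks can be trapped in cones, so we cannot force paths to meet by local modifications of the environment. This is where the two-valuedness must be exploited. Each site has only two possible behaviours, and the instruction at each visit is a pair of steps, so flipping a single site changes only a bounded, explicit set of transitions. Our first attempt would be to show that if two walks, one in $A_\ell$ and one in $A_{-\ell}$, have ranges that are a.s.\ eventually disjoint, then a flipping argument yields a contradiction with translation invariance and ergodicity. For this we would use a mass-transport or ergodic-averaging argument on the intersection of the ranges. If that fails, we would fall back on proving Kalikow's $0$-$1$ law directly in this setting, using the same coupling to show $\P(A_\ell\cup A_{-\ell})\in\{0,1\}$ together with $\P(A_\ell)\in\{0,1\}$. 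The mixture extension would then follow by conditioning on the mixing variables.
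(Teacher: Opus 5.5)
There is a genuine gap. The decisive step is asserted rather than proved, and the route you outline leads into a long-standing open problem. You claim that if $0<\P(A_\ell)<1$, your coupling yields, with positive probability, two walks in the same environment that meet at a common site while heading in opposite directions, and that they then coalesce.
\begin{itemize}
\item No mechanism is given for the meeting.
\item Coalescence fails for your coupling. With instructions $W_{x,k}$ attached to the $k$-th visit to $x$, the visit counts at every site depend on each walk's entire history. So two walks passing through a common site read different instructions and need not agree afterwards.
\item If they did merge, they would share an asymptotic direction. What you would then really be proving is $\P(A_\ell)\in\{0,1\}$, i.e.\ Kalikow's $0$-$1$ law, which is also your stated fallback. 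For $d\ge3$ this is open, and forcing intersections of paths in $\Z^d$ is exactly what breaks down there, with or without ellipticity.
\item The monotone coupling in $p$ and the regeneration formula for $v_{\pm\ell}$ do not bear on determinism at a fixed $p$.
\item $v_\ell$ and $v_{-\ell}$ can coincide only if both vanish. If only one vanishes, the velocity is still random.
\end{itemize}

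The idea you are missing is to move the coupling out of physical space into the two-dimensional space of step counts; this is where two-valuedness is used. The paper drives the walk by two bi-infinite streams $(\xi^{(1)}_k)_{k\in\Z}$ and $(\xi^{(2)}_k)_{k\in\Z}$ of i.i.d.\ $\mu_1$- and $\mu_2$-steps. They are indexed by how many times the walk has left a site of that type, so $X_n=S^{(1)}_{M^{(1)}_n}+S^{(2)}_{M^{(2)}_n}$ and $(M^{(1)}_n,M^{(2)}_n)$ is an up/right path in an arrow field on $\Z^2$. By the strong law, when $m_1\ne m_2$ everything reduces to showing that the scalar $\Theta=\lim_n M^{(1)}_n/n$ is a.s.\ constant; the case $m_1=m_2$ is trivial. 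The antidiagonal map $T$ shifts the first stream forward, the second backward, and recentres the scenery by $D=\xi^{(1)}_0-\xi^{(2)}_{-1}$. It preserves $\P$, is mixing because $D_n/n\to m_1-m_2\ne0$, and turns the walk of $T\bs{\zeta}$ into the path started from $(1,-1)$ in the same arrow field. Up/right paths started on a common antidiagonal cannot cross without merging, so $M^{(1)}_n(\bs{\zeta})\le 1+M^{(1)}_n(T\bs{\zeta})$, hence $\Theta\le\Theta\circ T$. Equality of means forces $\Theta=\Theta\circ T$ a.s., and ergodicity makes $\Theta$ constant. Planarity is thus recovered in the count lattice for every $d$, and no $0$-$1$ law, regeneration times, or meeting of walks in $\Z^d$ is needed.
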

\begin{remark}
Or equivalently, that there is a deterministic $v$ such that  
$\lim n^{-1}X_n=v$, $\P$-almost surely.
\end{remark}
\begin{remark} 
We will show that there is a deterministic $\theta=\theta(p)\in[0,1]$ such that
\[
 v=\theta m_1+(1-\theta)m_2.
\]
Obviously if $m_1=m_2$ then $v=m_1=m_2$. Moreover, if $m_1$ and $m_2$ are linearly independent, we obtain \textit{ballisticity} (i.e. $v\neq 0$).\footnote{The OpenAI release referred to earlier also claimed a proof of ballisticity under uniform ellipticity and directional transience.}
\end{remark}

In fact we can upgrade Theorem \ref{thm:main} to arbitrary i.i.d.~mixtures of $\mu_1, \mu_2$ as follows.
\begin{theorem}\label{thm:mixture}
Let $(Y_x)_{x\in\Z^d}$ be i.i.d.~random variables in $[0,1]$, and let
\[
 \omega_x=Y_x\mu_1+(1-Y_x)\mu_2,
\]
For the nearest-neighbour RWRE in this environment, there is a deterministic limiting velocity. 
\end{theorem}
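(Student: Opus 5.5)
Theorem \ref{thm:mixture} reduces to the two-valued setting of Theorem \ref{thm:main} by \emph{randomising the choice of local rule at each visit}. Given $(Y_x)$, attach to every site $x$ an independent i.i.d.\ sequence $(U_{x,k})_{k\ge1}$ of uniforms on $[0,1]$, independent of everything else. On the $k$-th visit of the walk to $x$, it uses $\mu_1$ if $U_{x,k}\le Y_x$ and $\mu_2$ otherwise. Averaged over the $U$'s, the quenched law of the walk in $\bs\omega$ is exactly that of the Markov chain with transition $Y_x\mu_1+(1-Y_x)\mu_2$ at $x$. The enlarged environment $\bs\eta_x=(Y_x,(U_{x,k})_k)$ is still i.i.d.\ over sites, and at every step the walk is driven by one of the two fixed laws $\mu_1,\mu_2$. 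This is the same structure the coupling argument for Theorem \ref{thm:main} exploits: a site-indexed i.i.d.\ ``instruction stack'' whose entries are labels in $\{1,2\}$. In the two-valued case the stack at $x$ is constant (all $1$ with probability $p$, all $2$ otherwise). Here the stack at $x$ is an exchangeable $\{1,2\}$-sequence with de Finetti measure equal to the law of $Y_o$.

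The plan is therefore to rerun the proof of Theorem \ref{thm:main} with cookie-stack environments in place of constant ones. First, decompose $X_n$ as a martingale plus the drift $\sum_{j<n} m_{r_j}$, where $r_j\in\{1,2\}$ is the label used at step $j$. The martingale has bounded increments, so $n^{-1}X_n-\big(\theta_n m_1+(1-\theta_n)m_2\big)\to0$ almost surely, where $\theta_n$ is the fraction of steps using label $1$. Since $n^{-1}X_n$ is already known to converge almost surely, it suffices to show that $\lim\theta_n$, or at least its projection $\lim \theta_n (m_1-m_2)$, is $\P$-almost surely constant. Second, establish the regeneration and renewal structure from \cite{HS} in the enlarged environment. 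This carries over because it only uses independence across sites and the fact that fresh sites carry fresh i.i.d.\ data. Third, apply the coupling. Two walks are driven by independent copies of the environment that agree off a finite region, or by shared stacks at a site with one swapped label. One shows that the limiting direction $\ell$ (which takes finitely many values by the $0$-$1$-type results) cannot depend on the environment. The coupling step needs only that a single-site change in the stack alters the label used at finitely many steps, which holds here.

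The main obstacle is that step three in the two-valued proof may use monotonicity or a direct comparison between a $\mu_1$ site and a $\mu_2$ site that has no analogue for a genuinely random stack. Revisits can use different laws, and non-ellipticity makes traps possible. What I would try first is conditioning on $(Y_x)$. Given $\mathbf Y$, the labels are independent Bernoulli$(Y_x)$ per visit, and a one-label swap at a visited site changes the law of the walk in a way that is absolutely continuous on finite time horizons. That should suffice to transfer the ``resampling a finite set doesn't change the limit'' argument. Failing that, one could discretise: approximate the law of $Y_o$ by finitely supported laws. One would then check that the argument for Theorem \ref{thm:main}, which should only use that the stack labels lie in a two-letter alphabet and not the number of stack types, applies verbatim, and finish with a continuity argument. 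That last step is the part I am least sure of.
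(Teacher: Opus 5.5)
Your first step is fine: it is Lemma~\ref{lem:slopes} in martingale form. The real problem is the third step, where the whole theorem lives. It is not an argument, and it is not how Theorem~\ref{thm:main} is proved. You propose to show that ``the limiting direction cannot depend on the environment'' via regeneration, $0$-$1$ laws and ``resampling a finite set does not change the limit''. That is the classical route, and it is stuck in $d\ge3$. Finite-modification arguments only trivialise quantities that are functions of $\bs\omega$. The velocity may a priori be random under $P^{\bs\omega}$ itself, for instance if the walk is transient in direction $\ell$ with quenched probability in $(0,1)$. Absolute continuity on finite time horizons says nothing about the tail quantity $\lim X_n/n$.

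The paper uses none of this machinery. Its proof rests on the following ingredients:
\begin{itemize}
\item the count-lattice arrow field $(i,j)\mapsto(i,j)+e_{\eta_{\Phi(i,j)}}$;
\item the antidiagonal map $T$, which shifts the streams by $+1$ and $-1$ and recentres the scenery by $D$;
\item the path identity $C^b_n(\bs\zeta)=b+C^o_n(T\bs\zeta)$ with $b=(1,-1)$;
\item the non-crossing Lemma~\ref{lem:order}, giving $\Theta\le\Theta\circ T$;
\item equality of expectations, and mixing of $T$.
\end{itemize}
Your discretisation fallback has the same gap. It also adds a continuity-in-law statement for the velocity that you have no means of proving.

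More importantly, your cookie-stack randomisation destroys exactly the structure the paper's proof needs. Lemma~\ref{lem:order} works because the label used at count point $(i,j)$ is a fixed function of $(i,j)$. That gives a genuine arrow field on $\Z^2$, in which up-right paths started on the same antidiagonal cannot cross without coalescing. With uniforms $U_{x,k}$ indexed by site and visit number, the label at $(i,j)$ depends on how often the walk has already visited $\Phi(i,j)$. So the walks built from $\bs\zeta$ and $T\bs\zeta$ can meet at a count point, read different cookies, separate and cross, and $M^{\sone}_n(\bs\zeta)\le 1+M^{\sone}_n(T\bs\zeta)$ is lost.

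The missing idea is to put the auxiliary randomness on the count lattice instead. Take i.i.d.\ uniforms $U_{i,j}$, $(i,j)\in\Z^2$, and let the arrow at $(i,j)$ point east if and only if $U_{i,j}<Y_{\Phi(i,j)}$. This works for three reasons:
\begin{itemize}
\item The coordinate sum of $C^o_n$ equals $n$, so each $U_{i,j}$ is read at most once. The quenched law is therefore still $Y_x\mu_1+(1-Y_x)\mu_2$.
\item The arrow field is path-independent, so Lemma~\ref{lem:order} applies.
\item Extending $T$ by $Y'_x=Y_{x+D}$ and $U'_{i,j}=U_{i+1,j-1}$ preserves measure, the path identity, and mixing, since the $U$-window moves by $(n,-n)$ under $T^{(n)}$.
\end{itemize}
The proof of Theorem~\ref{thm:main} then goes through verbatim, with no discretisation or continuity step needed.
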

\begin{remark}
Theorem \ref{thm:mixture} is equivalent to saying that there is a deterministic velocity for any  i.i.d.~environment law supported on an affine line in the transition-probability simplex (take the endpoints of the line as $\mu_1$ and $\mu_2$ respectively).
\end{remark}
In the case of $d=2$, the statement of Theorem \ref{thm:mixture} is already known   \cite[Theorem 1.5]{HS}.  The generalisation to dimensions $d\ge 3$ is new.  Our proof will use the well-known existence of the (possibly random) limit \cite[Theorem 1.4]{HS}.
\begin{theorem}[\cite{HS}]
\label{thm:V}
If $\bs{\omega}$ is an i.i.d.~environment then there is a $V=V(\bs{\omega})\in\R^d$ such that
\[
 P^{\bs{\omega}}\!\left(\lim_{n\to\infty}\frac{X_n}{n}=V\right)=1
 \quad\text{for $\Q$-almost every }\bs{\omega}.
\]
\end{theorem}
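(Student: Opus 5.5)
The plan is to follow the Sznitman--Zerner/Zerner strategy and decompose according to directional behaviour. Fix a direction $\ell\in\R^d\setminus\{0\}$. Up to $\P$-null sets, the path space splits into three events:
\[
A_\ell=\{X_n\cdot\ell\to\infty\},\qquad A_{-\ell}=\{X_n\cdot\ell\to-\infty\},\qquad O_\ell=\{\liminf X_n\cdot\ell=-\infty,\ \limsup X_n\cdot\ell=+\infty\}.
\]
The remaining event is that $X_n\cdot\ell$ stays bounded on one side without diverging. On this event the walk visits some hyperplane strip infinitely often. I would show, with a Borel--Cantelli argument over the i.i.d.~sites, that this forces the walk to be eventually confined to a finite set (a trap). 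In that case $X_n/n\to0$ trivially, and the same holds on any event where the range is finite.

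On $A_\ell$ with $\P(A_\ell)>0$, I would construct Sznitman--Zerner regeneration times $\tau_1<\tau_2<\cdots$. These are times at which $X\cdot\ell$ attains a new maximum and never afterwards drops below that level. Under the i.i.d.~assumption, conditionally on $A_\ell$, the increments $(X_{\tau_{k+1}}-X_{\tau_k},\tau_{k+1}-\tau_k)_{k\ge1}$ are i.i.d. The renewal law of large numbers then gives
\[
\frac{X_n}{n}\to v_\ell:=\frac{\E[X_{\tau_2}-X_{\tau_1}\mid A_\ell]}{\E[\tau_2-\tau_1\mid A_\ell]}\quad\text{on }A_\ell,
\]
which is deterministic (and equal to $0$ if the time increments have infinite mean). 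Without ellipticity, the delicate part is showing that $\tau_1<\infty$ a.s.~on $A_\ell$. I would handle this by using only the steps the walk actually takes: each new maximum gives a fresh, independent chance (of probability bounded below, given $\P(A_\ell)>0$) that the walk never backtracks. This uses only the independence of the environment on the unexplored half-space, not any lower bound on the $\omega_x(e)$.

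On $O_\ell$, I would adapt Zerner's non-ballistic argument to show that $\lim X_n\cdot\ell/n=0$. If the velocity were positive along a subsequence, then the walk would have to cross slabs in the $\ell$ direction ballistically infinitely often. A regeneration-type comparison, using the fact that the walk returns to arbitrarily low levels, would then contradict oscillation.

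To assemble the pieces, apply this trichotomy to $\ell$ in a countable dense set of directions, for instance $\pm e_i$ and rational directions. On each atom of the resulting decomposition, every projection $X_n\cdot\ell/n$ converges to a value that depends only on which event occurred. Hence $X_n/n$ converges to a limit $V$ that is a measurable function of the path and takes at most countably many values. Finally, I would use a quenched $0$--$1$ argument (a tail argument in the walk given $\bs\omega$) to show that $V$ is $P^{\bs\omega}$-a.s.~constant for $\Q$-a.e.~$\bs\omega$, which gives $V=V(\bs\omega)$. I expect the main obstacle to be the oscillating case together with the non-elliptic regeneration step. There, traps and unreachable regions prevent the usual ellipticity-based estimates, so every bound has to be built from the walk's realised steps.
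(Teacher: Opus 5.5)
Your decomposition (trichotomy in $\ell$, Sznitman--Zerner regeneration from fresh half-spaces on $A_\ell$, Zerner's argument on $O_\ell$) follows the route of the source the paper cites. \cite{HS} obtains Theorem~\ref{thm:V} by adapting \cite{SZ} and \cite{Zern02} to the non-elliptic case, and the paper gives no proof of its own.

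The genuine gap is your final step. Under $P^{\bs\omega}$ the walk is a Markov chain, not a sequence of independent increments, and the tail or invariant $\sigma$-field of a Markov chain need not be trivial. For example, a walk on $\Z$ drifting away from the origin on both sides escapes to $+\infty$ and to $-\infty$ with positive probability each. So no tail argument excludes $P^{\bs\omega}(A_\ell)>0$ and $P^{\bs\omega}(A_{-\ell})>0$ with $v_\ell\ne v_{-\ell}$.

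Nor can this step be rescued. Take an elliptic i.i.d.~environment. The walk from $o$ hits every $x$ with positive $P^{\bs\omega}$-probability, so quenched constancy would force $V(\bs\omega)=V(\theta_x\bs\omega)$ for all $x$, where $(\theta_x\bs\omega)_y=\omega_{x+y}$. Ergodicity of $\Q$ would then make $V$ deterministic. Your tail argument would therefore settle, for all elliptic i.i.d.~environments, the open problem described in the introduction. The same computation shows that the statement, read literally with $V$ a function of $\bs\omega$ alone, is stronger than what \cite{HS} provides.

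What your first steps do give, and all that Lemma~\ref{lem:slopes} uses, is that $\lim_n X_n/n$ exists $\P$-almost surely. This limit is a random variable on the joint space and may depend on the path; making it constant is exactly the job of the map $T$ later in the paper. So drop the last step and read $V$ that way.

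There are also some smaller errors. First, without ellipticity the walk need not be trapped in a finite set on your ``remaining event''. If every $\omega_x$ is supported on $\{\pm e_2\}$ and $\ell=e_1$, then $X_n\cdot\ell\equiv 0$ while the walk is recurrent on an infinite line. There you only get $X_n\cdot\ell/n\to 0$, not $X_n/n\to 0$.

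The two-sided bounded case needs no argument. The one-sided case $\{-\infty<\liminf X_n\cdot\ell<\limsup X_n\cdot\ell=\infty\}$ is null by the fresh-half-space argument you already use for $\tau_1$. Put $D=\inf\{n:X_n\cdot\ell<0\}$ and take $\ell$ rational, so that $\inf_n X_n\cdot\ell$ is attained when finite.
\begin{itemize}
\item If $\P(D=\infty)=0$, the quenched Markov property and translation invariance give $\liminf X_n\cdot\ell=-\infty$ almost surely.
\item If $\P(D=\infty)>0$, the new maxima on $\{\limsup X_n\cdot\ell=\infty\}$ yield infinitely many regeneration times, which forces $X_n\cdot\ell\to\infty$.
\end{itemize}

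Second, your $O_\ell$ step only gestures at Zerner's argument, which is the real content there. Third, the claim that the whole vector $v_\ell$ vanishes when $\E[\tau_2-\tau_1\mid A_\ell]=\infty$ needs control of the transverse displacement between regenerations. Only the $\ell$-component is sandwiched between consecutive regeneration levels.
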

The cited results from \cite{HS}  do not assume any ellipticity, but they are adaptions of  results of Sznitman and Zerner \cite{SZ}, Zerner \cite{Zern02,Zern07} and Zerner and Merkl \cite{ZM01} that do.
Berger \cite{Berger} has shown in the setting of uniform ellipticity (where there exists $\varepsilon>0$ such that $\omega_o(e)>\varepsilon$ for every $e\in \mc{E}$ almost surely) that the support of $V$ contains at most 1 point different from 0.

\subsection{Idea of the argument}
\label{sec:idea}
In this section we present  the general approach of the argument of Theorem \ref{thm:main},  which is an adaption of a coupling argument of \cite{HS}. 

\begin{proof}[Idea of the proof of Theorem \ref{thm:main}]
Let $\bs{\eta}=(\eta_x)_{x\in\Z^d}$ be i.i.d.~labels, with
\[
 \Prob(\eta_x=1)=p,\qquad \Prob(\eta_x=2)=1-p.
\]
Set $\omega_x=\mu_{\eta_x}$, i.e.~
\[\omega_x=\begin{cases}
\mu_1, & \text{ if }\eta_x=1\\
\mu_2, & \text{ if }\eta_x=2.
\end{cases}\]
Henceforth we will refer to $\bs{\eta}$ as the \emph{scenery}.   
A depiction of such a  scenery is given in Figure \ref{fig:environment} in the case $d=2$.
\begin{figure}
\begin{center}
\begin{tikzpicture}[ scale=0.8, every node/.style={font=\small} ] 
\def\p{0.6} 
\pgfmathsetseed{12345}
\draw[gray!50, thin] (-3,-3) grid (3,3); 
\draw (0,0) circle (6pt);
\foreach \i in {-3,...,3} { \foreach \j in {-3,...,3} { \pgfmathrandominteger{\r}{0}{9999} \pgfmathparse{ifthenelse(\r < 10000*\p,1,2)} \pgfmathprintnumberto[precision=0]{\pgfmathresult}{\valueatvertex} \node at (\i,\j) {\valueatvertex}; } }
 \end{tikzpicture}
 \end{center}
\caption{An example of a subset of the scenery $\bs{\eta}$ in the case $d=2$.  The origin is in the centre and is circled.}
\label{fig:environment}
\end{figure}
Thus, for fixed $\bs{\eta}$ transition probabilities of the walk, started at zero, are
\[
 P^{\bs \eta}(X_{n+1}=x+e\mid X_n=x)=\mu_{\eta_x}(e).
\]

Independently of $\bs{\eta}$, sample two independent \emph{bi-infinite}\footnote{in \cite{HS} they are only indexed by $\mathbb{N}=\{1,2,\dots\}$} sequences (which we will call the \emph{streams})
\[\bs{\xi}^{\sone}=
 (\xi^{\sone}_k)_{k\in\Z},\qquad \bs{\xi}^{\stwo}=(\xi^{\stwo}_k)_{k\in\Z},
\]
with $\xi^{\sr}_k$ i.i.d.~with law $\mu_r$.  In other words each $\xi_k^{\sr}$ takes values in $\mc{E}$.   
The streams are depicted in table form as follows (an example is given in Table \ref{tab:moves}):

\medskip

\begin{tabular}{c|cccc|ccccc}
index & $\dots$ & $-3$ & $-2$ & $-1$ & $0$ & $1$ & $2$ & $3$ & $\dots$\\
\hline
&&&&&&&&&\\
$\mu_1$ moves & $\dots $ & $\xi^{\sone}_{-3}$ & $\xi^{\sone}_{-2}$ & $\xi^{\sone}_{-1}$ & $\xi^{\sone}_{0}$ & $\xi^{\sone}_{1}$ & $\xi^{\sone}_{2}$ & $\xi^{\sone}_{3}$ & $\dots$\\
&&&&&&&&&\\
 $\mu_2$ moves & $\dots $ & $\xi^{\stwo}_{-3}$ & $\xi^{\stwo}_{-2}$ & $\xi^{\stwo}_{-1}$ & $\xi^{\stwo}_{0}$ & $\xi^{\stwo}_{1}$ & $\xi^{\stwo}_{2}$ & $\xi^{\stwo}_{3}$ & $\dots$
\end{tabular}

\medskip

\begin{table}
\begin{tabular}{c|cccc|ccccc}
index & $\dots$ & $-3$ & $-2$ & $-1$ & $0$ & $1$ & $2$ & $3$ & $\dots$\\
\hline
$\mu_1$ moves & $\dots $ & $e_2$ & $e_2$ & $e_1$ & $-e_1$ & $e_1$ & $-e_1$ & $e_2$ & $\dots$\\
 $\mu_2$ moves & $\dots $ & $-e_1$ & $-e_2$ & $e_1$ & $e_1$ & $-e_2$ & $-e_1$ & $e_1$ & $\dots$
\end{tabular}
\caption{An example of a realisation of two streams  of ``moves'' $\bs{\xi}^{\sone}$, $\bs{\xi}^{\stwo}$ when $d=2$.}
\label{tab:moves}
\end{table}
Let $\bs{\zeta}=(\bs{\eta},\bs{\xi}^{\sone}, \bs{\xi}^{\stwo})$ 
and let 
$\mathbb \P$ denote the joint product law on $\mathcal{G}=\sigma(\bs{\zeta})$.  
One can view the random walk in a 2-valued i.i.d.~environment on this space as follows (see Lemma \ref{lem:law} below.)  Starting from the origin $o=(0,\dots,0)\in \Z^d$, (i.e.~$X_0=o$), at time $n$ look at the environment at the current location, $X_n$.  If it is type $r\in\{1,2\}$, and if we have previously departed from sites of type $r$, $M^{\sr}_n=k$ times before, we use $\xi_k^{\sr}$ to determine our next move, i.e.~$X_{n+1}=X_n+\xi_k^{\sr}$ on this event.  After making this move we have departed from a type $r$ site one additional time, so on the next departure from such a site we will use $\xi_{k+1}^{\sr}$.  

\begin{example}
\label{exa:1}
\START
We illustrate this with a concrete example in two dimensions, using Figure \ref{fig:environment} and Table \ref{tab:moves}.  Starting from $X_0=o=(0,0)\in \Z^2$, we see in Figure \ref{fig:environment} that the environment at $X_0$ is type 2.  Our first move is then determined by $\xi^{\stwo}_0$, i.e.~the random variable in row 2 (corresponding to type 2) of index 0, which is the first one to the right of the vertical line in the table (corresponding to the first departure from a vertex of type 2).  In the example of Table \ref{tab:moves}, the value of $\xi^{\stwo}_0=\rightarrow=e_1\in \mc{E}$.  Thus set $X_1=X_0+e_1=(1,0)$.   Now look at the environment at $X_1=(1,0)$.  In Figure \ref{fig:environment} this is type 1.  So we now use the move $\xi^{\sone}_0$, i.e.~the first move to the right of the vertical line in the table.  In the example of Table \ref{tab:moves} this is $\xi^{\sone}_0=\leftarrow=-e_1$, so $X_2=X_1-e_1=(0,0)$.  The environment at $X_2=(0,0)$ is type 2 (a type that we have already departed from once before) so we now use $\xi^{\stwo}_1$, which in Table \ref{tab:moves} is $\downarrow=-e_2$ so $X_3=(0,-1)$.  As an exercise, check that in this example the next two moves are to $X_4=(1,-1)$ and $X_5=(0,-1)$. \END 
\end{example}
The following is elementary.
\begin{lemma}\label{lem:law}
Conditional on $\bs{\eta}$ (equivalently $\bs{\omega}$), $\bs{X}=(X_n)_{n\ge 0}$ has the prescribed law $P^{\bs{\omega}}$.
\end{lemma}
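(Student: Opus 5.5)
We prove Lemma \ref{lem:law} by checking that, conditional on $\bs{\eta}$, the increments of $\bs{X}$ have the Markov transition probabilities of $P^{\bs{\omega}}$. Fix $\bs{\eta}$. Since the streams are independent of $\bs{\eta}$, conditioning on $\bs{\eta}$ leaves $\bs{\xi}^{\sone},\bs{\xi}^{\stwo}$ as independent i.i.d.\ sequences with laws $\mu_1,\mu_2$. Let $\mathcal F_n=\sigma(\bs{\eta},X_0,\dots,X_n)$. Formally $X_{n+1}=X_n+\xi^{\sss{\{\eta_{X_n}\}}}_{M_n^{\sss{\{\eta_{X_n}\}}}}$, where
\[
M^{\sr}_n=\#\{0\le j<n:\eta_{X_j}=r\}, \qquad r=1,2 .
\]
So $(X_0,\dots,X_n)$ is a deterministic function of $\bs{\eta}$ and the stream entries used so far.

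The key step is the following claim. Conditional on $\bs{\eta}$ and on the path up to time $n$, the entry $\xi^{\sr}_{M^{\sr}_n}$ is independent of $(X_0,\dots,X_n)$ and has law $\mu_r$.

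To prove the claim, observe that the path $(X_0,\dots,X_n)$ is determined by $\bs{\eta}$ together with $\{\xi^{\sone}_k:0\le k<M^{\sone}_n\}$ and $\{\xi^{\stwo}_k:0\le k<M^{\stwo}_n\}$. Moreover, $M^{\sr}_n$ is itself determined by the path. Fix a path $x_0=o,x_1,\dots,x_n$ and let $m_r$ be the corresponding counts. The event $\{X_j=x_j,\ j\le n\}$ then equals an event measurable with respect to $\bs{\eta}$ and $\{\xi^{\sr}_k: k<m_r,\ r=1,2\}$. This holds because one checks inductively that the $j$-th step uses index $\#\{i<j:\eta_{x_i}=\eta_{x_j}\}$ on this event. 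Hence $\xi^{\sr}_{m_r}$ is independent of that event given $\bs{\eta}$, and it has law $\mu_r$.

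From the claim we get
\[
P\bigl(X_{n+1}=x_n+e\,\big|\,\bs{\eta},X_j=x_j,\ j\le n\bigr)=\P\bigl(\xi^{\sss{\{\eta_{x_n}\}}}_{m}=e\bigr)=\mu_{\eta_{x_n}}(e)=\omega_{x_n}(e),
\]
with $m=m_{\eta_{x_n}}$. Multiplying these one-step probabilities gives the finite-dimensional distributions of $P^{\bs{\omega}}$ started at $o$, and these determine the law.

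The only mildly delicate point is the inductive stopping-time-type measurability in the claim. The index $M^{\sr}_n$ is random, so one must not simply assert that $\xi^{\sr}_{M^{\sr}_n}$ is fresh. Decomposing over fixed paths, as above, handles this cleanly. The negative-index entries of the streams are never used, so they play no role here.
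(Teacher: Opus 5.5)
Your argument is correct and is essentially the paper's own. The paper only sketches it in the paragraph after \eqref{eq:walk}, observing that each increment is a not-yet-used stream entry and hence independent of the past with law $\mu_{\eta_{X_n}}$; your decomposition over fixed paths is the rigorous version of that remark and correctly handles the random index $M^{\sr}_n$ (one small notational point: $m_r$ already denotes the mean of $\mu_r$ in the paper, so your path counts should get another name).
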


From Theorem \ref{thm:V} the limit $V=\lim_{n \to \infty}n^{-1}X_n$ exists.  But $X_n$ is composed of $M^{\sone}_{n}$ type $1$ steps and $n-M^{\sone}_{n}$ type $2$, where as above $M^{\sone}_{n}$ is the number of times that the walk has departed from type 1 vertices after $n$ steps.  If the means $m_1$ and $m_2$ are the same it is then easy to see that $V=m_1=m_2$ is deterministic.  So the interesting setting is where $u=m_1-m_2\ne o$.  In this case from the above argument we can conclude that $n^{-1}M^{\sone}_{n}\to \Theta$ almost surely, and that 
\[V=\Theta m_1+(1-\Theta)m_2, \qquad \Theta=\frac{u \cdot (V-m_2)}{\|u\|^2}.\]
Thus it is sufficient to show that $\Theta$ is deterministic.  

The proof constructs a certain antidiagonal transformation $T$ on $\bs{\zeta}$ (in particular it shifts the $\bs{\xi}^{\sone}$ sequence one step forward and the $\bs{\xi}^{\stwo}$ sequence one step backward) that is ergodic under $\mathbb{P}$.  It will have the property that $M^{\sone}_n(\bs{\zeta})\le 1+ M^{\sone}_n(T\bs{\zeta})$ for all $n\ge 0$ and hence that 
\[\Theta\le \Theta\circ T, \qquad \mathbb{P}-\text{almost surely}.\]  At the same time $T$ being measure preserving gives (with $\E$ denoting expectation with respect to $\P$)
\[\E[\Theta]=\E[\Theta\circ T].\]  Since $\Theta\in [0,1]$ this implies that $\Theta\circ T=\Theta$ almost surely, so ergodicity implies that $\Theta$  is almost surely constant, completing the proof.
\end{proof}

The majority of the remainder of the paper gives details of the above argument.

\section{The probability space}
\label{sec:probspace}
We start by presenting a variant of the coupling argument of \cite{HS}.    Recall that $\bs{\zeta}=(\bs{\eta},\bs{\xi}^{\sone}, \bs{\xi}^{\stwo})$ 
and that  
$\mathbb P$ denotes the joint product law on $\mathcal{G}=\sigma(\bs{\zeta})$.  Let $\bs{\xi}=(\bs{\xi}^{\sone}, \bs{\xi}^{\stwo})$.

Define bi-infinite random walks $\bs{S}^{\sr}=(S^{\sr}_n)_{n \in \Z}$ with step distribution $\mu_r$ from the $\bs{\xi}^{\sr}$, i.e.
\[
 S^{\sr}_0=0,\qquad S^{\sr}_{k+1}-S^{\sr}_k=\xi^{\sr}_k, \qquad k\in\Z.
\]
In particular
\[
 S^{\sr}_n=\sum_{k=0}^{n-1}\xi^{\sr}_k,\qquad
 S^{\sr}_{-n}=-\sum_{k=-n}^{-1}\xi^{\sr}_k, \qquad n\ge 1.
\]
Define a random map $\Phi=\Phi_{\bs{\xi}}:\Z^2 \to \Z^d$ by 
\begin{equation}\label{eq:phi}
 \Phi(i,j)=S^{\sone}_i+S^{\stwo}_j,\qquad (i,j)\in\Z^2.
\end{equation}
This map depends only on the streams (and is hence independent of the scenery).

Create a $\mc{G}$-measurable random arrow field by placing an arrow at each count-lattice site $(i,j)$ towards $(i,j)+e_{\eta_{\Phi(i,j)}}$, i.e.
\begin{equation}\label{eq:arrows}
 (i,j)\longmapsto
 \begin{cases}
 (i+1,j),&\text{ if }\eta_{\Phi(i,j)}=1,\\
 (i,j+1),&\text{ if }\eta_{\Phi(i,j)}=2.
 \end{cases}
\end{equation}
This depends on both the scenery and the streams.
\begin{example}
\START Continuing from Example \ref{exa:1}, we have e.g.~that $\Phi(0,0)=o\in \Z^2$ and 
\begin{align}
\Phi(-3,-3)&=-(\sum_{k=-3}^{-1}\xi^{\sone}_k)-(\sum_{\ell=-3}^{-1}\xi^{\stwo}_k)\\
&=-e_1-e_2-e_2-e_1-(-e_2)-(-e_1)=-e_1-e_2=(-1,-1).
\end{align}
Figure \ref{fig:Phi} shows the values of $\Phi(i,j)$ for $(i,j)\in \{-3,-2,\dots, 2, 3\}^2$ as determined from Table \ref{tab:moves}.  It is helpful to start from $\Phi(-3,-3)=(-1,-1)$ and add a corresponding $\mu_1$ (resp.~$\mu_2$) move from Table \ref{tab:moves} if we increase $i$ (resp.~$j$) by 1.  

\begin{figure}
\begin{center}
\resizebox{8cm}{8cm}{
\begin{tikzpicture}[ scale=2, every node/.style={font=\small} ] 
\draw[gray!50, thin] (-3,-3) grid (3,3);
\draw (0,0) circle (6pt);
\node at (-3,-3) {$(-1,-1)$};
\node at (-3,-2) {$(-2,-1)$};
\node at (-3,-1) {$(-2,-2)$};
\node at (-2,-3) {$(-1,0)$};
\node at (-1,-3) {$(-1,1)$};
\node at (0,-3) {$(0,1)$};
\node at (1,-3) {$(-1,1)$};
\node at (2,-3) {$(0,1)$};
\node at (3,-3) {$(-1,1)$};

\node at (-2,-2) {$(-2,0)$};
\node at (-1,-2) {$(-2,1)$};
\node at (0,-2) {$(-1,1)$};
\node at (1,-2) {$(-2,1)$};
\node at (2,-2) {$(-1,1)$};
\node at (3,-2) {$(-2,1)$};

\node at (-2,-1) {$(-2,-1)$};
\node at (-1,-1) {$(-2,0)$};
\node at (0,-1) {$(-1,0)$};
\node at (2,-1) {$(-1,0)$};
\node at (3,-1) {$(-2,0)$};

\node at (0,0) {$(0,0)$};
\node at(1,0) {$(-1,0)$};
\node at (2,0) {$(0,0)$};
\node at (3,0) {$(-1,0)$};
\node at(-1,0) {$(-1,0)$};
\node at(-2,0) {$(-1,-1)$};
\node at (-3,0) {$(-1,-2)$};
\node at (0,1) {$(1,0)$};
\node at (0,2) {$(1,-1)$};
\node at (0,3) {$(0,-1)$};
\node at (1,-1) {$(-2,0)$};
\node at (-3,1) {$(0,-2)$};
\node at (-2,1) {$(0,-1)$};
\node at (-1,1) {$(0,0)$};
\node at (1,1) {$(0,0)$};
\node at (2,1) {$(1,0)$};
\node at (3,1) {$(0,0)$};
\node at (-3,2) {$(0,-3)$};
\node at (-3,3) {$(-1,-3)$};
\node at (-2,2) {$(0,-2)$};
\node at (-1,2) {$(0,-1)$};
\node at (0,2) {$(1,-1)$};
\node at (1,2) {$(0,-1)$};
\node at (2,2) {$(1,-1)$};
\node at (3,2) {$(0,-1)$};
\node at (-2,3) {$(-1,-2)$};
\node at (-1,3) {$(-1,-1)$};
\node at (1,3) {$(-1,-1)$};
\node at (2,3) {$(0,-1)$};
\node at (3,3) {$(-1,-1)$};

\end{tikzpicture}
}
\end{center}
\caption{The values of $\Phi(i,j)$ for $-3\le i,j\le 3$ determined from Table \ref{tab:moves}.}
\label{fig:Phi}
\end{figure}
We can now infer the arrows at such $i,j$ using the information in Figures \ref{fig:environment} and \ref{fig:Phi}.  Since $\eta_{(0,0)}=2$ the arrow at all $(i,j)$ for which $\Phi(i,j)=(0,0)$ will be $\uparrow$.  The corresponding arrow system is given in Figure \ref{fig:arrows1}.
\END
\end{example}

\begin{figure}
\begin{center}
\resizebox{8cm}{8cm}{
\begin{tikzpicture}[ scale=2, every node/.style={font=\small} ] 
\draw[gray!50, thin] (-3,-3) grid (3,3);
\draw (0,0) circle (6pt);
\draw[dashed] (-3.5,3.5)--(3.5,-3.5);
\draw[->,ultra thick]  (-3,-3)--(-2.5,-3);
\draw[->,ultra thick]  (-3,-2)--(-2.5,-2); 
\draw[->,ultra thick] (-3,-1)--(-2.5,-1);
\draw[->,ultra thick] (-2,-3)--(-2,-2.5);
\draw[->,ultra thick] (-1,-3)--(-1,-2.5);
\draw[->,ultra thick] (0,-3)--(0,-2.5);
\draw[->,ultra thick] (1,-3)--(1,-2.5);
\draw[->,ultra thick] (2,-3)--(2,-2.5);
\draw[->,ultra thick] (3,-3)--(3,-3+0.5);
\draw[->,ultra thick] (-2,-2)--(-2+0.5,-2);
\draw[->,ultra thick] (-1,-2)--(-1+0.5,-2);
\draw[->,ultra thick] (0,-2)--(0,-2+0.5);
\draw[->,ultra thick](1,-2)--(1+0.5,-2);
\draw[->,ultra thick] (2,-2)--(2,-2+0.5);
\draw[->,ultra thick] (3,-2)--(3+0.5,-2);

\draw[->,ultra thick] (-2,-1)--(-2+0.5,-1);
\draw[->,ultra thick] (-1,-1)--(-1+0.5,-1);
\draw[->,ultra thick] (0,-1)--(0,-1+0.5);
\draw[->,ultra thick] (1,-1)--(1+0.5,-1);
\draw[->,ultra thick] (2,-1)--(2,-1+0.5);
\draw[->,ultra thick] (3,-1)--(3+0.5,-1);

\draw[->,ultra thick] (0,0)--(0,0+0.5);
\draw[->,ultra thick] (1,0)--(1,0+0.5);
\draw[->,ultra thick] (2,0)--(2,0+0.5);
\draw[->,ultra thick] (3,0)--(3,0+0.5);

\draw[->,ultra thick] (-1,0)--(-1,0+0.5);
\draw[->,ultra thick] (-2,0)--(-2+0.5,0);
\draw[->,ultra thick] (-3,0)--(-3,0+0.5);
\draw[->,ultra thick] (0,1)--(0+0.5,1);
\draw[->,ultra thick] (0,2)--(0+0.5,2);
\draw[->,ultra thick] (0,3)--(0+0.5,3);
\draw[->,ultra thick] (1,-1)--(1+0.5,-1);
\draw[->,ultra thick] (-3,1)--(-3+0.5,1);
\draw[->,ultra thick] (-2,1)--(-2+0.5,1);
\draw[->,ultra thick] (-1,1)--(-1,1+0.5);
\draw[->,ultra thick] (1,1)--(1,1+0.5);
\draw[->,ultra thick] (2,1)--(2+0.5,1);
\draw[->,ultra thick] (3,1)--(3,1+0.5);
\draw[->,ultra thick] (-3,2)--(-3+0.5,2);
\draw[->,ultra thick] (-3,3)--(-3+0.5,3) ;
\draw[->,ultra thick] (-2,2)--(-2+0.5,2) ;
\draw[->,ultra thick] (-1,2)--(-1+0.5,2) ;
\draw[->,ultra thick] (0,2)--(0+0.5,2) ;
\draw[->,ultra thick] (1,2)--(1+0.5,2) ;
\draw[->,ultra thick] (2,2)--(2+0.5,2) ;
\draw[->,ultra thick] (3,2)--(3+0.5,2) ;
\draw[->,ultra thick] (-2,3)--(-2,3+0.5) ;
\draw[->,ultra thick] (-1,3)--(-1+0.5,3) ;
\draw[->,ultra thick] (1,3)--(1+0.5,3);
\draw[->,ultra thick] (2,3)--(2+0.5,3);
\draw[->,ultra thick] (3,3)--(3+0.5,3);

\end{tikzpicture}
}
\end{center}
\caption{A portion of the arrow field for the example corresponding to Figure \ref{fig:environment} and Table \ref{tab:moves}.}
\label{fig:arrows1}
\end{figure}

For $a\in\Z^2$, let $\bs{C}^a=(C^a_n)_{n\ge 0}$ (here $\bs{C}^a=\bs{C}^a(\bs{\zeta})$) be the path in the arrow field started at $a$ obtained by following arrows.  This depends on both the scenery and the streams. Write
\[
 C^o_n=(M^{\sone}_n,M^{\stwo}_n),\qquad M^{\sone}_n+M^{\stwo}_n=n.\]
Let 
\begin{equation}\label{eq:walk}
 X_n=\Phi(C^o_n)=S^{\sone}_{M^{\sone}_n}+S^{\stwo}_{M^{\stwo}_n}.
\end{equation}
Equation \ref{eq:walk} is just a rewording of the construction of the walk as in Section \ref{sec:idea}.  To see this note first that the walk starts at $\Phi(0,0)=o$.  If $C^o_n=(k,n-k)$ then the walker is at position $x=\Phi(C^o_n)=S^{\sone}_{k}+S^{\stwo}_{n-k}$ and has used the steps $\xi^{\sone}_0,\dots,\xi^{\sone}_{k-1}$ and $\xi^{\stwo}_0,\dots,\xi^{\stwo}_{n-k-1}$.  The scenery at $x$ is $r=\eta_x$, so the arrow at $(k,n-k)$ will be $\rightarrow$ if and only if $r=1$ (i.e.~iff the environment at $x$ is of type 1).  Thus $C^o_{n+1}=(k,n-k)+e_{r}$ and so the $r$-th component of $\bs{C}^o$ grows by 1.  This means that the increment of $X_n$ is $\xi^{\sone}_{k}$ if $r=1$ or $\xi^{\stwo}_{n-k}$ otherwise, and this increment is independent of the past, with the distribution $\mu_r$ as required.
As described in Lemma \ref{lem:law}, conditional on the scenery $\bs{\eta}$, this random walk has the correct law $P^{\bs{\eta}}$ (and $M_n^{\sr}$ is the number of times that the walker has selected a step from $\mu_r$ after $n$ steps of the walk).   

The following is implicit in \cite[pages 17-18]{HS} (in particular Corollary 4.3 therein), but we include a proof here for completeness.
\begin{lemma}\label{lem:slopes}
Almost surely,
\begin{equation}\label{eq:drift}
 \frac{X_n}{n}-\Big(m_2+(m_1-m_2)\frac{M_n^{\sone}}{n}\Big)\to 0.
\end{equation}
Moreover if $m_1\ne m_2$  then
 $M_n^{\sone}/n$ converges almost surely to a random variable $\Theta\in[0,1]$.\footnote{In fact it also holds for $m_1=m_2$ (see \cite[Corollary 4.3]{HS}) but we will not need this} 
\end{lemma}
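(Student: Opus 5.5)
The plan is to reduce everything to the strong law of large numbers for the two stream walks $\bs{S}^{\sone}$, $\bs{S}^{\stwo}$, together with the existence of the limit $V$ from Theorem \ref{thm:V}. By \eqref{eq:walk}, $X_n=S^{\sone}_{M^{\sone}_n}+S^{\stwo}_{M^{\stwo}_n}$ with $M^{\sone}_n+M^{\stwo}_n=n$. The SLLN gives, almost surely, $S^{\sr}_k/k\to m_r$ as $k\to\infty$, for $r=1,2$. Equivalently, $S^{\sr}_k-km_r=o(k)$, and hence also $S^{\sr}_k-km_r=o(n)$ uniformly over $0\le k\le n$.

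For the uniformity claim, fix $\ee>0$ and pick $K$ such that $|S^{\sr}_k-km_r|\le\ee k$ for $k\ge K$. Then
\[
\max_{0\le k\le n}|S^{\sr}_k-km_r|\le \max_{k<K}|S^{\sr}_k-km_r|+\ee n,
\]
and the first term is a fixed finite quantity. Since $M^{\sr}_n\in[0,n]$ are random indices, this uniform-in-$k$ estimate is exactly what is needed: no control on how $M^{\sr}_n$ behaves is required.

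Substituting into \eqref{eq:walk} gives
\[
X_n=M^{\sone}_n m_1+(n-M^{\sone}_n)m_2+o(n)=n m_2+(m_1-m_2)M^{\sone}_n+o(n)
\]
almost surely. Dividing by $n$ yields \eqref{eq:drift}. The only care needed is that the almost-sure SLLN events for the two streams are fixed null-set exclusions that do not depend on $\bs{\eta}$, so they apply simultaneously to the random indices.

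For the second claim, assume $u=m_1-m_2\ne o$. By Lemma \ref{lem:law}, conditional on $\bs{\eta}$ the walk has law $P^{\bs{\omega}}$. Theorem \ref{thm:V} then gives $X_n/n\to V(\bs{\omega})$, $P^{\bs{\omega}}$-a.s. for $\Q$-a.e. $\bs{\omega}$, and hence $\P$-a.s. after integrating over the environment. Take the inner product of \eqref{eq:drift} with $u$ and divide by $\|u\|^2$:
\[
\frac{M^{\sone}_n}{n}=\frac{u\cdot(X_n/n-m_2)}{\|u\|^2}+o(1)\to\frac{u\cdot(V-m_2)}{\|u\|^2}=:\Theta .
\]
Since $M^{\sone}_n/n\in[0,1]$ for every $n$, the limit satisfies $\Theta\in[0,1]$.

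The main, if mild, obstacle is the transfer from the quenched statement of Theorem \ref{thm:V} to an almost-sure statement on the product space $(\bs{\zeta},\P)$. This is a Fubini argument. The event $\{X_n/n \text{ converges}\}$ is $\mathcal{G}$-measurable, and its conditional probability given $\bs{\eta}$ equals $1$ for a.e. $\bs{\eta}$ by Lemma \ref{lem:law}. Hence the event has $\P$-probability $1$.
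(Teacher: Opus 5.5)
Your proof is correct and follows essentially the same route as the paper: the uniform-in-$k$ strong-law bound $\max_{0\le k\le n}\|S^{\sss(r)}_k-km_r\|=o(n)$ handles the random indices $M^{\sss(r)}_n\in[0,n]$, and taking the inner product of \eqref{eq:drift} with $m_1-m_2$ recovers $\Theta$ from $V$. The only difference is that you also spell out two steps the paper leaves implicit: the Fubini transfer of Theorem \ref{thm:V} to $\P$-almost sure convergence via Lemma \ref{lem:law}, and the reason why $\Theta\in[0,1]$.
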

\begin{proof}
For each stream, the strong law implies
\begin{equation}
 \frac1n\cdot \max_{0\leq k\leq n}\norm{S^{\sss (r)}_k-km_r}\longrightarrow0, \qquad \text{almost surely}.\label{fish1}
\end{equation}
Indeed, for any $\ee>0$, all sufficiently large $k$ have error at most $\ee k$, while the finitely many earlier errors divided by $n$ vanish. 
Now 
\[\frac{X_n}{n}=\frac{S^{\sone}_{M_n^{\sone}}-M_n^{\sone}m_1}{n}+\frac{S^{\stwo}_{M_n^{\stwo}}-M_n^{\stwo}m_2}{n}+\frac{M_n^{\sone}}{n}m_1+\frac{M_n^{\stwo}}{n}m_2.\]
Using \eqref{fish1} and then the fact that $M_n^{\sone}+M_n^{\stwo}=n$ gives the almost surely limit  \eqref{eq:drift}. 

When $m_1\ne m_2$, put $\delta=m_1-m_2$. If $V=\lim X_n/n$, then taking the dot product of  \eqref{eq:drift} with $\delta/\|\delta\|^2$ gives
\[
 \frac{M_n^{\sone}}{n}
 =\frac{\delta\cdot(X_n/n-m_2)}{\norm{\delta}^2}+o(1)
 \longrightarrow
 \frac{\delta\cdot(V-m_2)}{\norm{\delta}^2}=:\Theta.
\]
\end{proof}

It remains to show that $\Theta$ is deterministic. From now on assume $m_1\ne m_2$.

\section{The antidiagonal transformation}
Recall that $\bs{\zeta}=(\bs{\eta},\bs{\xi}^{\sone}, \bs{\xi}^{\stwo})$.  Define $T\bs{\zeta}=\bs{\zeta}'=(\bs{\eta}',\bs{\xi}^{\sone\prime},\bs{\xi}^{\stwo\prime})$ by
\begin{align}
 \xi^{\sone\prime}_k&=\xi^{\sone}_{k+1},&
 \xi^{\stwo\prime}_k&=\xi^{\stwo}_{k-1},\label{eq:Tstreams}\\
 D&=S^{\sone}_1+S^{\stwo}_{-1}=\xi^{\sone}_0-\xi^{\stwo}_{-1},&
 \eta'_x&=\eta_{x+D}.\label{eq:Teta}
\end{align}
Thus one stream is shifted forward, the other backward, and the scenery is recentered by the corresponding physical displacement.

\begin{lemma}\label{lem:preserving}
$T$ preserves $\mathbb P$.
\end{lemma}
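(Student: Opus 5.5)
The plan is to condition on the streams and use the independence of $\bs{\eta}$ from $\bs{\xi}$ under $\mathbb P$, together with the translation invariance of the i.i.d.\ law of the scenery. The map $T$ has two components. The first, $\bs{\xi}\mapsto\bs{\xi}'$, depends only on the streams. The second, $\bs{\eta}\mapsto\bs{\eta}'$, is a spatial shift of the scenery by the displacement $D$, which is itself a function of the streams alone.

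The first step is to check that the stream part of $T$ preserves the law of $\bs{\xi}$. Under $\mathbb P$, $\bs{\xi}^{\sone}$ and $\bs{\xi}^{\stwo}$ are independent bi-infinite i.i.d.\ sequences with laws $\mu_1$ and $\mu_2$. A forward shift of the first and a backward shift of the second each preserve the corresponding product measure on $\mc E^{\Z}$, because the index set is $\Z$. This is exactly why the streams were taken bi-infinite. Independence of the two streams is also kept, so $\bs{\xi}'$ has the same law as $\bs{\xi}$.

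The second step is to show that, conditionally on $\bs{\xi}$, the scenery $\bs{\eta}'$ is i.i.d.\ with the original marginal and is independent of $\bs{\xi}'$. I would compute the joint law directly: for bounded measurable $F$ of the streams and $G$ of the scenery,
\[
\E\big[F(\bs{\xi}')G(\bs{\eta}')\big]=\E\Big[F(\bs{\xi}')\,\E\big[G(\theta_{D}\bs{\eta})\,\big|\,\bs{\xi}\big]\Big],
\]
where $\theta_z$ denotes the spatial shift $(\theta_z\bs{\eta})_x=\eta_{x+z}$. Since $\bs{\eta}$ is independent of $\bs{\xi}$ and $D$ is $\sigma(\bs{\xi})$-measurable, the inner conditional expectation equals $h(D)$ with $h(z)=\E[G(\theta_z\bs{\eta})]$. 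The i.i.d.\ product measure is invariant under every lattice translation, so $h(z)=\E[G(\bs{\eta})]$ for all $z\in\Z^d$. The right-hand side therefore factorizes as $\E[F(\bs{\xi}')]\,\E[G(\bs{\eta})]=\E[F(\bs{\xi})]\,\E[G(\bs{\eta})]$, using the first step. This is exactly the integral of $F\otimes G$ against $\mathbb P$.

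Finally, products $F(\bs{\xi})G(\bs{\eta})$ form a $\pi$-system generating $\mathcal G$, so a monotone class argument gives $\mathbb P\circ T^{-1}=\mathbb P$. I expect no serious obstacle here. The only point needing care is the random shift $D$, since it is not independent of $\bs{\xi}'$: it involves $\xi^{\sone}_0=\xi^{\sone\prime}_{-1}$ and $\xi^{\stwo}_{-1}=\xi^{\stwo\prime}_0$. The conditioning argument handles this, because translation invariance makes $h$ constant, so the dependence between $D$ and $\bs{\xi}'$ disappears. Measurability of $(\bs{\eta},z)\mapsto\theta_z\bs{\eta}$ is routine, since $D$ takes only finitely many values.
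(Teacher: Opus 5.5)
Your proposal is correct and follows the same route as the paper. Bi-infinite shifts preserve the stream law, and conditioning on the streams freezes $D$, so translation invariance of the i.i.d.\ scenery gives $\bs{\eta}'$ its original law independently of the shifted streams. Your version just makes explicit the factorization $\E[F(\bs{\xi}')G(\bs{\eta}')]=\E[F(\bs{\xi})]\,\E[G(\bs{\eta})]$ and the $\pi$-system step, which the paper leaves implicit.
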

\begin{proof}
The shifted streams have their original joint product law. Conditional on both entire streams, $D$ is fixed. Since the scenery is independent of the streams and its law is invariant under every lattice translation, $(\eta_{x+D})_x$ has its original law conditional on the streams. In particular, this recentred scenery remains independent of the shifted streams.
\end{proof}

\paragraph{Shift covariance.}
The cumulative sums $S_n^{\sr\prime}=\sum_{k=0}^{n-1}\xi^{\sr\prime}_k$ and $S_{-n}^{\sr\prime}=-\sum_{k=-n}^{-1}\xi^{\sr\prime}_k$ for $n \in \mathbb{N}$ (and with $S^{\sr\prime}_0=0$) associated with $\bs{\zeta}'=T\bs{\zeta}$ satisfy
\[
 S^{\sone\prime}_i=S^{\sone}_{i+1}-S^{\sone}_1,\qquad
 S^{\stwo\prime}_j=S^{\stwo}_{j-1}-S^{\stwo}_{-1}.
\]
Consequently,
\begin{align}
\label{eq:covariance1}
 \Phi_{\bs{\xi}'}(i,j)&=S_i^{\sone\prime}+S_j^{\stwo\prime}
 =S^{\sone}_{i+1}-S^{\sone}_1+S^{\stwo}_{j-1}-S^{\stwo}_{-1}
 =  \Phi_{\bs{\xi}}(i+1,j-1)-D,
\end{align}
which depends only on the shifted streams (so is independent of the environment and shifted environment).  Therefore
\begin{equation}
 \eta'_{\Phi_{\bs{\xi}'}(i,j)}=\eta'_{\Phi_{\bs{\xi}}(i+1,j-1)-D}=\eta_{\Phi_{\bs{\xi}}(i+1,j-1)}.\label{covariance2}
\end{equation}
Let $b=(1,-1)$. Equations  \eqref{eq:covariance1} and \eqref{covariance2} give the exact path identity
\begin{equation}\label{eq:pathcov}
 C^b_n(\bs{\zeta})=b+C^o_n(\bs{\zeta}').
\end{equation}
Indeed, on the left hand side, the arrow at $(i,j)+b$ is to $(i+1,j)+b$ if and only if $\eta_{\Phi_{\bs{\xi}}(i+1,j-1)}=1$.  On the right hand side the arrow at $(i,j)$ is to $(i+1,j)$ if and only if $\eta'_{\Phi_{\bs{\xi}'}(i,j)}=1$, i.e.~if and only if $\eta_{\Phi_{\bs{\xi}}(i+1,j-1)}=1$.

Recall the following standard notions.
\begin{definition}
\START $T$ is \emph{mixing} if for every pair of bounded measurable real-valued functions $f, g$,
\[\lim_{n \to \infty}\E[f\cdot(g\circ T^{(n)})]=\E[f]\E[g].\]
It is \emph{ergodic} if for any r.v. $Z$, $Z\circ T=Z$ implies $Z$ is a.s. constant.
\END
\end{definition}
It is well known that mixing implies ergodicity, see e.g.~\cite[Theorem 1.9]{Walters}.  
\begin{lemma}\label{lem:mixing}
If $m_1\ne m_2$, the transformation $T$ is mixing, and hence ergodic, under $\mathbb P$.
\end{lemma}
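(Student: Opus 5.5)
The plan is to verify the mixing identity first for \emph{local} (cylinder) functions and then pass to general bounded $f,g$ by approximation. Throughout I write $T^{(n)}$ for the $n$-fold iterate as in the definition of mixing, and $\bs{\xi}$ for the pair of streams.

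\textbf{Iterates of $T$.} Iterating \eqref{eq:Tstreams}--\eqref{eq:Teta}, $T^{(n)}\bs{\zeta}$ has streams $\xi^{\sone}_{k+n}$ and $\xi^{\stwo}_{k-n}$. Its scenery is $\eta_{x+D_n}$, where
\[
D_n=\Phi_{\bs{\xi}}(n,-n)=S^{\sone}_n+S^{\stwo}_{-n}=\sum_{k=0}^{n-1}\xi^{\sone}_k-\sum_{k=-n}^{-1}\xi^{\stwo}_k .
\]
This follows by induction from \eqref{eq:covariance1}, since the displacements add up along the iteration. By the strong law of large numbers for each stream, $D_n/n\to m_1-m_2\neq o$ almost surely, so $\|D_n\|\to\infty$ almost surely.

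\textbf{Cylinder functions.} Fix $K$ and let $f,g$ be bounded functions of $\big((\eta_x)_{\|x\|_\infty\le K},(\xi^{\sone}_k,\xi^{\stwo}_k)_{|k|\le K}\big)$. Then $g\circ T^{(n)}$ depends on:
\begin{itemize}
\item the stream coordinates $\xi^{\sone}_k$ with $|k-n|\le K$ and $\xi^{\stwo}_k$ with $|k+n|\le K$;
\item the scenery on the box $D_n+[-K,K]^d$.
\end{itemize}
Let $A_n=\{\|D_n\|_\infty>2K\}$. This event is $\sigma(\bs{\xi})$-measurable, and on $A_n$ the two scenery boxes are disjoint.

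Condition on $\bs{\xi}$. The scenery is independent of the streams and i.i.d., so on $A_n$,
\[
\E\big[f\cdot g\circ T^{(n)}\mid \bs{\xi}\big]=F\big((\xi_k)_{|k|\le K}\big)\,G\big((\xi^{\sone}_{k+n},\xi^{\stwo}_{k-n})_{|k|\le K}\big).
\]
Here $F$ and $G$ are obtained by integrating $f$ and $g$ over the scenery. The key point is that translation invariance of the scenery law makes $G$ independent of the value of $D_n$, even though $D_n$ depends on stream coordinates shared with $f$. Hence
\[
\big|\E[f\cdot g\circ T^{(n)}]-\E[F\,G\circ\theta_n]\big|\le 2\|f\|_\infty\|g\|_\infty\,\P(A_n^c),
\]
where $\theta_n$ denotes the antidiagonal stream shift.

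For $n>2K$, the stream indices entering $F$ and $G\circ\theta_n$ are disjoint. The streams are i.i.d., so $\E[F\,G\circ\theta_n]=\E[F]\E[G]=\E[f]\E[g]$. Since $\P(A_n^c)\to0$, this gives mixing for cylinder functions.

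\textbf{General bounded functions.} Cylinder functions are dense in $L^2(\P)$ (equivalently $L^1$ for bounded functions). Given bounded $f,g$, choose cylinder $\tilde f,\tilde g$ with $\|f-\tilde f\|_2,\|g-\tilde g\|_2<\ee$. By Lemma~\ref{lem:preserving}, $\|g\circ T^{(n)}-\tilde g\circ T^{(n)}\|_2=\|g-\tilde g\|_2$. Cauchy--Schwarz then bounds the error in $\E[f\cdot g\circ T^{(n)}]$, and in $\E[f]\E[g]$, by $O(\ee)$ uniformly in $n$. Letting $n\to\infty$ and then $\ee\to0$ gives mixing, and ergodicity follows from \cite[Theorem 1.9]{Walters}.

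\textbf{Main obstacle.} The delicate step is the conditional factorisation. The shift $D_n$ is built from stream coordinates $\xi^{\sone}_0,\dots,\xi^{\sone}_{n-1}$ and $\xi^{\stwo}_{-n},\dots,\xi^{\stwo}_{-1}$, which overlap those seen by $f$, so one cannot simply claim that $f$ and $g\circ T^{(n)}$ depend on disjoint independent coordinates. The resolution is twofold. First, condition on all of $\bs{\xi}$ and use translation invariance of the i.i.d.\ scenery to remove the dependence on $D_n$ from $G$. Second, use the hypothesis $m_1\ne m_2$ only through $\P(\|D_n\|_\infty\le 2K)\to0$, which guarantees that the two scenery windows separate.
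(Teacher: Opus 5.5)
Your proof is correct and takes essentially the same route as the paper. You compute $T^{(n)}$ via $D_n=S^{\sone}_n+S^{\stwo}_{-n}$, condition on the streams and use translation invariance of the scenery to reduce $g\circ T^{(n)}$ to a function of the shifted streams (the paper's $\bar g\circ T_0^{(n)}$), bound the error by $2\|f\|_\infty\|g\|_\infty$ times the probability that the scenery windows overlap, use independence of disjoint stream windows, and extend from cylinder functions by density and Cauchy--Schwarz; your explicit boxes $[-K,K]^d$ and threshold $n>2K$ just make the paper's $A$, $B$ and $n_0$ concrete.
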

\begin{proof}
Iterating \eqref{eq:Tstreams}--\eqref{eq:Teta} gives $T^{(n)}\bs{\zeta}=\big((\bs{\eta})^{(n)},(\bs{\xi}^{\sone})^{(n)},(\bs{\xi}^{\stwo})^{(n)}\big)$, where
\begin{align*}
 (\xi^{\sone})^{(n)}_k&=\xi^{\sone}_{k+n},&
 (\xi^{\stwo})^{(n)}_k&=\xi^{\stwo}_{k-n},&
 \eta^{(n)}_x&=\eta_{x+D_n},\\
 D_n&=S^{\sone}_n+S^{\stwo}_{-n}.
\end{align*}
By the strong law,
\begin{equation}\label{eq:displacement}
 \frac{D_n}{n}\longrightarrow m_1-m_2\ne0
 \quad\mathbb P\text{-almost surely}.
\end{equation}
In particular, $\norm{D_n}\to\infty$.

Let $f,g$ be measurable functions that depend on only finitely many coordinates of $\bs{\zeta}$. Suppose their $\bs{\eta}$  coordinates lie in finite sets $A,B\subset\Z^d$, respectively. For some deterministic $n_0$, and all $n\ge n_0$, the $(\bs{\xi}^{\sone}, \bs{\xi}^{\stwo})$  coordinates used by $f$ and by $g\circ T^{(n)}$ are disjoint: the two stream windows of the latter having shifted by $+n$ and $-n$, respectively.

Condition on the entire streams. The scenery coordinates of $\bs{\eta}$ used by $f$ and by $g\circ T^{(n)}$ are $A$ and $B+D_n$. Whenever these sets are disjoint, their independence (and the fact that $D_n$ depends only on the streams) gives the factorization of the conditional expectation over scenery. To make the dependency bookkeeping explicit, let
\[
 \bar f(\bs{\xi}^{\sone},\bs{\xi}^{\stwo})=\E[f(\bs{\eta},\bs{\xi}^{\sone},\bs{\xi}^{\stwo})\mid \bs{\xi}^{\sone},\bs{\xi}^{\stwo}],
\]
and define $\bar g$ analogously. Write $T_0$ for the transformation of the two streams alone. The invariance of the scenery law under translation gives
\[
 \E[g\circ T^{(n)}\mid\bs{\xi}^{\sone},\bs{\xi}^{\stwo}]
 =\bar g\circ T_0^{(n)} \text{, a.s.}
\]
Therefore on $\{A\cap(B+D_n)=\varnothing\}$ we have that 
\[
\E[f\cdot (g\circ T^{(n)})\mid\bs{\xi}^{\sone},\bs{\xi}^{\stwo} ]
=\E[f\mid\bs{\xi}^{\sone},\bs{\xi}^{\stwo} ]\cdot \E[g\circ T^{(n)}\mid\bs{\xi}^{\sone},\bs{\xi}^{\stwo} ]
=\bar f\cdot  (\bar g\circ T_0^{(n)})\text{, a.s.}
\]
It follows that
\begin{equation}
\left|\E[f\cdot (g\circ T^{(n)})]
       -\E[\bar f\cdot (\bar g\circ T_0^{(n)})]\right|\leq
 2\norm{f}_\infty\norm{g}_\infty\,
 \mathbb P(A\cap(B+D_n)\ne\varnothing),
 \label{eq:mixingbound}
\end{equation}
whenever $n\ge n_0$. The probability on the right tends to zero by \eqref{eq:displacement}, since $A$ and $B$ are finite. For $n\ge n_0$, independence of the disjoint stream-coordinate windows also yields
\[
 \E[\bar f(\bar g\circ T_0^{(n)})]
 = \E[\bar f]\cdot \E[\bar g\circ T_0^{(n)}]
 =\E[f]\E[g].
\]
Thus mixing holds for bounded cylinder functions. These are dense in $L^2(\mathbb P)$. Measure preservation and the Cauchy--Schwarz inequality now extend the conclusion to all bounded measurable functions, proving mixing.
\end{proof}

\begin{lemma}\label{lem:order}
For every realization of $\bs{\zeta}$ and every $n\geq0$,
\begin{equation}\label{eq:order}
 M_n^{\sone}(\bs{\zeta})\leq 1+M^{\sone}_n(\bs{\zeta}').
\end{equation}
\end{lemma}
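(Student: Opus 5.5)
The plan is to derive \eqref{eq:order} from the path identity \eqref{eq:pathcov} together with a monotonicity (non-crossing) property of paths in a single arrow field. Taking first coordinates in \eqref{eq:pathcov} gives
\[
 \big(C^b_n(\bs{\zeta})\big)_1 = 1 + \big(C^o_n(\bs{\zeta}')\big)_1 = 1+M^{\sone}_n(\bs{\zeta}').
\]
It therefore suffices to show that, in the fixed arrow field determined by $\bs{\zeta}$, we have $\big(C^o_n\big)_1\le \big(C^b_n\big)_1$ for every $n\ge 0$. Here $\big(C^o_n\big)_1=M^{\sone}_n(\bs{\zeta})$ by definition.

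\textbf{Same antidiagonal.} Every arrow increases $i+j$ by exactly one. Both starting points $o=(0,0)$ and $b=(1,-1)$ lie on the antidiagonal $\{i+j=0\}$. Hence for each $n$ both $C^o_n$ and $C^b_n$ lie on $\{i+j=n\}$. A point on this line is determined by its first coordinate.

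\textbf{Induction on $n$.} Let $\Delta_n=\big(C^b_n\big)_1-\big(C^o_n\big)_1$, so that $\Delta_0=1$. Each step changes a first coordinate by $0$ or $1$, so $|\Delta_{n+1}-\Delta_n|\le 1$. We show $\Delta_n\ge 0$ for all $n$, splitting into two cases.
\begin{itemize}
\item If $\Delta_n\ge 1$, then $\Delta_{n+1}\ge \Delta_n-1\ge 0$.
\item If $\Delta_n=0$, the two paths occupy the same site at time $n$, because they lie on the same antidiagonal. They then follow the same arrow, since the arrow at a site is a deterministic function of that site and $\bs{\zeta}$. Hence $C^o_m=C^b_m$ for all $m\ge n$, and $\Delta_m=0$ thereafter.
\end{itemize}
This gives $\Delta_n\ge0$ for all $n$, that is, $M^{\sone}_n(\bs{\zeta})\le 1+M^{\sone}_n(\bs{\zeta}')$.

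There is no real obstacle. The only points needing care are that the argument is pathwise, holding for every realization (it uses only the deterministic arrow-field structure, no probability), and that \eqref{eq:pathcov} holds exactly for all $n$. The latter was verified just before the definition of mixing by comparing the arrows at $(i,j)+b$ under $\bs{\zeta}$ with those at $(i,j)$ under $\bs{\zeta}'$ and inducting along the path.
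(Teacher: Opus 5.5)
Your proof is correct and takes essentially the paper's approach. You read off $M^{\sone}_n(\bs{\zeta}')=(C^b_n(\bs{\zeta}))_1-1$ from \eqref{eq:pathcov}, then use that two north/east paths in the same arrow field, started on the antidiagonal $\{i+j=0\}$, stay on common antidiagonals, change their first-coordinate gap by at most one per step, and coalesce once they meet; the paper phrases this last step through the first meeting time $\tau$ rather than your induction.
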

\begin{proof}
First note that for $n=0$ the inequality is simply $0\le 1+0$ so is true.  
From \eqref{eq:pathcov} we have that 
\begin{equation}
\label{eq:pathcov2}
 C^o_n(\bs{\zeta}')=C^b_n(\bs{\zeta})-b.
\end{equation}
Let $K_n$ be the first coordinate of $C^b_n(\bs{\zeta})$.  Then  $M^{\sone}_n(\bs{\zeta}')=K_n-1$ by \eqref{eq:pathcov2}.  Since in the arrow system each arrow points either north or east we have that $(C^x_{n+1}(\bs{\zeta})-C^x_{n}(\bs{\zeta}))\cdot (1,1)=1$ for any $x$.  So for any $x$ of the form $x=(k,-k)$ for $k \in \Z$ (whence $C^x_{0}(\bs{\zeta})\cdot (1,1)=0$) we have that  $C^x_{n}(\bs{\zeta})\cdot (1,1)=n$.   Both $b=(1,-1)$ and $o=(0,0)$ are of this form.  Let $\tau=\inf\{n:K_n=M_n^{\sone}(\bs{\zeta})\}$, which is the first time that the first coordinates of the two paths in the $\bs{\zeta}$ arrow system match.   
 Since the sums of coordinates of $C^b_\tau(\bs{\zeta})$ and $C^o_\tau(\bs{\zeta})$ are both equal to $\tau$, also the second coordinates are the same at this time, so $C^o_{\tau}(\bs{\zeta})=C^b_{\tau}(\bs{\zeta})$.  From time $\tau$ both paths are identical since they evolve in the same arrow system.  It follows that for all $n\ge \tau$, 
 \[M_n^{\sone}(\bs{\zeta}')=M_n^{\sone}(\bs{\zeta})-1.\]
 Now $K_0-M^{\sone}_0(\bs{\zeta})=1$.  The difference $K_{n+1}-M^{\sone}_{n+1}(\bs{\zeta})$ can only change by $0, -1,+1$ at each time step, so for all $n<\tau$ we have $K_n\ge M^{\sone}_n(\bs{\zeta})+1$.  In other words 
 \[M_n^{\sone}(\bs{\zeta}')=K_n-1\ge M^{\sone}_n(\bs{\zeta}).\]
\end{proof}
We can now complete the proof of Theorem \ref{thm:main}.
\begin{proof}[Proof of Theorem \ref{thm:main}]
The equal-mean case follows trivially from Lemma \ref{lem:slopes}. Otherwise $M_n^{\sone}(\bs{\zeta})/n\to\Theta$, $\P$-almost surely. Since $T$ is measure preserving, 
$M_n^{\sone}(\bs{\zeta}')/n\to\Theta'=\Theta\circ T$ almost surely.  Divide \eqref{eq:order} by $n$ and let $n\to\infty$ to obtain
\[
 \Theta\leq\Theta\circ T,\quad\mathbb P\text{-almost surely}.
\]
But $0\leq\Theta\leq1$ and measure preservation gives
\[
 \E[\Theta]=\E[\Theta\circ T].
\]
The (nonnegative) difference must therefore vanish, i.e.
\[
 \Theta=\Theta\circ T\quad\mathbb P\text{-almost surely}.
\]
By ergodicity from Lemma \ref{lem:mixing}, $\Theta$ is almost surely a deterministic constant $\theta$. Substitution in \eqref{eq:drift} yields
\[
 \frac{X_n}{n}\longrightarrow \theta m_1+(1-\theta)m_2, \qquad \mathbb{P}-\text{almost surely.}
\]
\end{proof}

\section{Extension to Theorem \ref{thm:mixture}}
\label{sec:mixture}
Let $\bs{Y}=(Y_x)_{x\in \Z^d}$ be i.i.d.~taking values in $[0,1]$ as in the statement of Theorem \ref{thm:mixture}.  As in the theorem set 
\begin{equation}
\omega_x=Y_x\mu_1+(1-Y_x)\mu_2.\label{omega_mix_def}\end{equation}
On every departure from $x\in \Z^d$ we sample a step from either $\mu_1$ or $\mu_2$, with the probability of the former (given $Y_x$) being $Y_x$.   
Theorem \ref{thm:main} concerns the special case where  each $Y_x$ is supported on $\{0,1\}$ (with $\eta_x=2-Y_x$).  
\begin{proof}[Proof of  Theorem \ref{thm:mixture}]
Use the same bi-infinite  streams with laws $\mu_1$ and $\mu_2$ respectively.  Independently, sample i.i.d.~uniforms $\bs{U}=(U_{i,j})_{(i,j)\in\Z^2}$.  Now $\bs{\zeta}=(\bs{Y},\bs{U}, \bs{\xi}^{\sone}, \bs{\xi}^{\stwo} )$ and again  
$\mathbb P$ denotes the joint product law on $\mathcal{G}=\sigma(\bs{\zeta})$.  Define the $S^{\sr}_i$ variables and the  physical position $\Phi(i,j)$ associated with a count vertex $(i,j)\in \Z^2$ exactly as before (see \eqref{eq:phi}) depending only on the streams.  The arrow field  depends on $\bs{\zeta}$ as follows (this replaces \eqref{eq:arrows}):
\[
(i,j)\mapsto \begin{cases}
(i+1,j), & \text{ if } U_{i,j}<Y_{\Phi(i,j)},\\
(i,j+1), & \text{ otherwise}.
\end{cases}
\]
The environment $\bs{\omega}$  is i.i.d.~because the $Y_x$ are. Define $\bs{C}^a(\bs{\zeta})$, $M_n^{\sone}(\bs{\zeta})$, $M_n^{\stwo}(\bs{\zeta})$ and $X_n=X_n(\bs{\zeta})$ as before (see \eqref{eq:walk}).  Again, conditional on $\bs{Y}$ (or equivalently $\bs{\omega}$) $\bs{X}=(X_n(\bs{\zeta}))_{n \ge 0}$ has the correct law of the random walk in environment $\bs{\omega}$.  Moreover, $M_n^{\sone}$ is the number of $\mu_1$ steps  (i.e.~$\bs{\xi}^{\sone}$ variables) used up to time $n$.    

Because the environment $\bs{\omega}$ is i.i.d., Theorem \ref{thm:V} applies and Lemma \ref{lem:slopes} still holds (with identical proof).  This handles the case of $m_1=m_2$ as before (i.e.~$X_n/n\to m_2=m_1$, a.s.). So we need only handle the case $m_1\ne m_2$, in which we again know that $\Theta=\lim  \frac{M_n^{\sone}}{n}$ exists. Therefore what we need prove is that $\Theta$ is a.s. constant.

Define $T:\bs{\zeta}\to \bs{\zeta}'=(\bs{Y}',\bs{U}',\bs{\xi}^{\sone \prime},\bs{\xi}^{\stwo\prime })$ by
\begin{align}
 \xi^{\sone\prime}_k&=\xi^{\sone}_{k+1},&
 \xi^{\stwo\prime}_k&=\xi^{\stwo}_{k-1},\label{eq:Tstreams2}\\
 D&=S^{\sone}_1+S^{\stwo}_{-1}=\xi^{\sone}_0-\xi^{\stwo}_{-1},&
 Y'_x&=Y_{x+D}, \qquad U'_{i,j}=U_{i+1,j-1}.\label{eq:TY}
\end{align}
Then $T$ is again measure preserving, and the proof of Lemma \ref{lem:order} applies as before. Moreover if we can show that $T$ is mixing, the remainder of the argument will then apply.

Once again, $T^{(n)}\bs{\zeta}=\big((\bs{Y})^{(n)},\bs{U}^{(n)},(\bs{\xi}^{\sone})^{(n)},(\bs{\xi}^{\stwo})^{(n)}\big)$, where
\begin{align*}
 (\xi^{\sone})^{(n)}_k&=\xi^{\sone}_{k+n},&
 (\xi^{\stwo})^{(n)}_k&=\xi^{\stwo}_{k-n},\\
 D_n&=S^{\sone}_n+S^{\stwo}_{-n},&
 Y^{(n)}_x&=Y_{x+D_n}, \qquad U^{(n)}_{i,j}=U_{i+n,j-n}.
\end{align*}
Moreover $\norm{D_n}$ still $\to \infty$, and $D_n$ depends only on the streams. 

Fix measurable $f$ and $g$ that are bounded and depend only on finitely many coordinates of $\bs{\zeta}$. Then for some large enough deterministic $n_0$, the $(\bs{\xi}^{\sone},\bs{\xi}^{\stwo},\bs{U})$ coordinates of $f$ and $g\circ T^{(n)}$ relied on are once more independent whenever $n\ge n_0$. Let $A$ and $B$ represent the $\bs{Y}$ coordinates used in $f$ and $g$ respectively. It is again the case that conditional on $(\bs{\xi}^{\sone},\bs{\xi}^{\stwo},\bs{U})$, we'll have independence of $f$ and $g\circ T^{(n)}$, at least on the event $\{A\cap(B+D_n)=\varnothing\}$. This implies \eqref{eq:mixingbound}, and the proof now proceeds as before.
\end{proof}
\section*{Acknowledgements}
A version of this argument was obtained by prompting ChatGPT version  GPT-6 Astra Pro. Salisbury's research is supported in part by a grant from NSERC.

\end{document}